\renewcommand{\baselinestretch}{1.0}
\title{Hanf Locality and Invariant Elementary Definability
\thanks{Henry Towsner is partially supported by NSF grant DMS-1340666.}
}
\author{%
Steven Lindell \\ Haverford College \and
Henry Towsner \\ University of Pennsylvania \and
Scott Weinstein \\ University of Pennsylvania}
\newtheorem{theorem}{Theorem}[section]
\newtheorem{lemma}[theorem]{Lemma}
\newtheorem{definition}[theorem]{Definition}
\newtheorem{remark}[theorem]{Remark}
\newtheorem{example}[theorem]{Example}
\newcommand{\pres}[1]{\ensuremath{\mathbb{#1}}}
\newcommand{\card}[1]{\mbox{$|#1|$}}
\newcommand{\strs}[1]{\mbox{$\mathfrak{#1}$}}
\renewcommand{\max}{\ensuremath{\mathsf{max}}}
\newcommand{\cb}[3]{\ensuremath{\mathsf{B}_{#3}({#1},{#2})}}
\newcommand{\rloct}[3]{\ensuremath{\sigma^{#1}({#2},{#3})}}
\newcommand{\loct}[2]{\ensuremath{\sigma^{\infty}({#1},{#2})}}
\newcommand{\gcb}[3]{\ensuremath{{#1}^{#3}({#2})}}
\newcommand{\dist}[3]{\ensuremath{\delta_{#3}({#1},{#2})}}
\newcommand{\rest}[2]{\ensuremath{(#1\!\upharpoonright\!#2)}}
\newcommand{\op}[2]{\mbox{$\langle #1,#2 \rangle$}}
\newcommand{\hre}[3]{\ensuremath{ #1 \approx_{#3} #2 }}
\newcommand{\bqe}[3]{\ensuremath{ #1 \equiv_{\mathfrak{#3}} #2 }}
\newcommand{\lqe}[3]{\ensuremath{ #1 \equiv_{#3} #2 }}
\newcommand{\he}[2]{\ensuremath{ #1 \approx #2 }}
\newcommand{\thr}[4]{\ensuremath{ #1 \approx_{#3,#4} #2 }}
\newcommand{\mathsc}[1]{{\normalfont\textsc{#1}}}
\definecolor{Blue}{rgb}{0.2,0.3,0.9}
\definecolor{Green}{rgb}{0.0,0.7,0.0}
\newtheorem{dispar}[equation]{}
\begin{document}
\maketitle

\begin{abstract}
We introduce some notions of invariant elementary definability which extend the notions of first-order order-invariant definability, and, more generally, definability invariant with respect to arbitrary numerical relations. In particular, we study invariance with respect to expansions which depend not only on (an ordering of) the universe of a structure, but also on the particular relations which determine the structure; we call such expansions \emph{presentations} of a structure. We establish two locality results in this context. The first is an extension of the original Hanf Locality Theorem to boolean queries which are invariantly definable over classes of locally finite structures with respect to \emph{elementary, neighborhood-bounded} presentations. The second is a non-uniform version of the Fagin-Stockmeyer-Vardi Hanf Threshold Locality Theorem to boolean queries which are invariantly definable over classes of bounded degree structures with respect to elementary, neighborhood-bounded, \emph{local} presentations.

\end{abstract}




\section{Introduction}

Locality theorems characterize various senses in which properties of a relational structure that are definable in a particular way depend only on ``local information'' about the structure, typically, isomorphism types of finite radius neighborhoods in the Gaifman-graph of the structure. Such theorems provide simple and perspicuous proofs of descriptive lower bound results, that is, proofs that various combinatorial properties  cannot be defined with specified logical resources (see,\ \cite{Ga82},\ \cite{FSV95},\ \cite{Nurmonen96},\ \cite{HLN99}). They are also central to the proofs of various preservation theorems over classes of finite structures (see\ \cite{ADK06},\  \cite{ADG08},\ \cite{Dawar10}) and of numerous algorithmic metatheorems (see \cite{gk2011}). In this paper, we explore  Hanf locality of boolean queries which are invariantly definable on classes of (locally) finite structures. 

We introduce some notions of invariant elementary definability which extend the notions of first-order order-invariant definability, and, more generally, definability invariant with respect to arbitrary numerical relations; we study invariance with respect to expansions which depend not only on (an ordering of) the universe of a structure, but also on the particular relations which determine the structure.  We call such expansions \emph{presentations} of a structure. Examples of presentations are breadth-first and depth-first traversals of finite graphs, and local orders of locally finite graphs. We establish two locality results in this context. The first, Theorem 
\ref{nbpresinvHanf-thm}, is an extension of the original Hanf Locality Theorem to boolean queries which are invariantly definable over classes of locally finite structures with respect to \emph{elementary, neighborhood-bounded} presentations. The second, Theorem \ref{locpresthreshloc-thm}, is a non-uniform version of the Fagin-Stockmeyer-Vardi Hanf Threshold Locality Theorem to boolean queries which are invariantly definable over classes of bounded degree structures with respect to elementary, neighborhood-bounded, \emph{local} presentations.
\paragraph{Related Work}

The study of locality  properties of first-order logic began with \cite{Han65}, who proved that relational structures which are fully locally equivalent up to a countable multiplicity threshold are elementarily equivalent.  \cite{Ga82} analyzed locality of relations definable within a single structure and established local normal forms for first-order formulas that have application to descriptive lower bounds over classes of finite structures as well as infinite structures.  \cite{FSV95} established a refinement of the locality theorem of \cite{Han65} which shows that for each first-order sentence $\theta$, there is a locality radius $r_\theta$, depending only on the quantifier rank of $\theta$, such that structures locally equivalent up to radius $r_\theta$ and up to a countable multiplicity threshold, are $\theta$-equivalent. Moreover, they showed that over structures of bounded degree $d$, the multiplicity threshold depends exponentially on $d$.  \cite{GS00} address the question of locality of queries which are not first-order definable over finite structures. In particular, they establish Gaifman locality of queries which are order-invariant first-order definable over finite structures. \cite{HLN99} provides an extensive and detailed study of the relations among several notions of locality and establishes locality results for several extensions of first-order logic, among them $\mathsc{FO+COUNT}$. The paper also provides a counterexample to Gaifman locality of queries definable order-invariantly over finite structures by  $\mathsc{FO+COUNT}$-formulas, and suggests the possibility that such a locality result might hold non-uniformly in the size of a structure, that is, when the radius of locality is allowed to depend not only on the quantifier rank of a formula defining the query but also on the cardinality of the structure over which it is evaluated. \cite{AMSS12} establishes just such a non-uniform form of Gaifman locality for queries definable in $\mathsc{FO}$ by formulas invariant with respect to arbitrary numerical predicates over finite structures, that is, predicates whose interpretation depends only on an arbitrary linear order of the universe; in particular, they demonstrate Gaifman locality for any query first-order definable invariantly with respect to arbitrary numerical predicates, where the radius of locality depends polylogarithmically on the size of the structure over which it is evaluated. \cite{AMSS12} also establishes a similar non-uniform form of Hanf locality for queries definable by first-order formulas invariant with respect to arbitrary numerical predicates over string structures, again with radius of locality depending polylogarithmically on the size of the structure. The question of such a Hanf locality result for arbitrary finite structures remains open.  In this paper we explore Hanf locality of queries which are definable over some ``tame'' classes of structures (locally finite structures and bounded degree structures) invariantly with respect to certain ``tame'' presentations (neighborhood bounded and local presentations). We consider presentations of a structure other than built-in numerical relations, in that they depend (elementarily) on relations of the structure other than an arbitrary ordering of its universe. In this context, we establish a Hanf type locality result for invariant definability over locally finite structures, and a Hanf threshold type result for invariant definability over bounded degree structures. The latter result is ``non-uniform'', in the sense that the radius of locality depends not only on the defining formulas, but also on the degree of structures over which it is evaluated. Our threshold theorem applies to boolean queries which are invariantly definable via local-orders over structures with Gaifman graphs of bounded degree; invariant definability via orders and local-orders over ``tame'' classes of finite graphs, such as graphs of bounded degree and bounded tree-width, has been studied in \cite{benseg} where certain ``collapse'' results were established.    
\section{Preliminaries}

The next definition formulates some topological notions requisite for describing locality precisely. We write $\strs{K}_\tau$ ($\strs{F}_\tau$) for the collection of (finite) structures of finite relational similarity type $\tau$, that is, $\tau$ consists of finitely many relation and constant symbols.
\begin{definition}\label{D:balls}
Let $G$ be a simple graph, $a,b\in G$ and $r\geq 0$.
\begin{enumerate} 
\item We write $\dist{a}{b}{G}$ for the distance between $a$ and $b$ in $G$.
\item $\cb{G}{a}{r}=\{c\in G\mid \dist{a}{c}{G}\leq r\}$, for $r\in\omega$. $\cb{G}{a}{\infty}=\bigcup_{r\in\omega}\cb{G}{a}{r}$.
\item Let $A\in\strs{K}_\tau$. The \emph{Gaifman-graph} of $A$ (denoted $G_A$) is the simple graph whose node set is $A$ and whose edge relation holds between a pair of nodes $a,b$ with $a\neq b$ if and only if for some $R\in\tau$ and tuple $\overline{c}\in R^A$ there are $i,j$ with $c_i = a$ and $c_j =b$.
\item We say a structure $A$ has \emph{ degree bounded by $d$} (\emph{finite degree}) if and only if  for every $a\in A$ the degree of $a$ in $G_A$ is at most $d$ (degree of $a$ in $G_A$ is finite). We write $\strs{K}^d_\tau$ ($\strs{F}^d_\tau$) for the collection of (finite) $\tau$-structures with degree bounded by $d\in\omega$, and $\strs{K}^{<\omega}_\tau$ for the collection of $\tau$-structures of finite degree.
 We will suppress the subscript $\tau$ when it is clear from the context. 
\item Let $r\in\omega$ or $r=\infty$, and let  $a\in A$. 
\[\gcb{A}{a}{r}= \op{\rest{A}{\cb{G_A}{a}{r}}}{a}.\] 
The structure $\gcb{A}{a}{r}$ is  called the pointed $r$-neighbor-hood of $a$ in $A$. Note that $\gcb{A}{a}{\infty}$ is the substructure of $A$ induced by the connected component of $a$ in $G_A$.
\end{enumerate}
\end{definition}
$\strs{K}^{<\omega}_\tau$ is also known as the class of \emph{locally finite} $\tau$-structures, because $A\in\strs{K}^{<\omega}_\tau$ if and only if for every $a\in A$ and every $r\in\omega$, $\cb{G_A}{a}{r}$ is finite. Note that if $A$ is locally finite, then for every $a\in A$, $\cb{G_A}{a}{\infty}$ is either finite or countably infinite. 

Let $t\in \omega\cup\{\omega\}$. We say sets $X$ and $Y$ are equipollent up to threshold $t$ (written $X\sim_t Y$) if and only if either $\card{X} = \card{Y}$ or $\card{X},\card{Y}\geq t$.

Let $\strs{J}\subseteq\strs{K_\tau}$ and $A,B\in\strs{K}_\tau$. We write $\bqe{A}{B}{J}$ if $A$ and $B$ agree on \strs{J}, that is, $(A\in\strs{J} \iff B\in\strs{J})$, and similarly,
$\lqe{A}{B}{\theta}$ for $(A\models\theta \iff B\models\theta)$, when $\theta$ is a first-order sentence in signature $\tau$. $A\equiv B$ ($A$ is elementarily equivalent to $B$) if and only if $\lqe{A}{B}{\theta}$, for all first-order sentences $\theta$.

\begin{definition}
Suppose $A,B\in\strs{K}_\tau$ and $\strs{K}\subseteq\strs{K}_\tau$. 
\begin{enumerate}
\item 
Let $r\in\omega$ and $t\in\omega\cup\{\omega\}$. $A$ is \emph{Hanf threshold $r,t$-equivalent to $B$} (written $\thr{A}{B}{r}{t}$) if and only if for every pointed $\tau$-structure $C$, 
 \[\{a\in A\mid \gcb{A}{a}{r}\cong C\} \sim_t \{b\in B\mid \gcb{B}{b}{r}\cong C\},\]
that is, each isomorphism type of pointed $r$-neighbor-hood is realized the same number of times in $A$ and $B$ up to threshold $t$. 
 \item 
 A class of $\tau$-structures $\strs{J}$ is \emph{Hanf $r,t$-local on $\strs{K}$} if and only if for all $A,B\in \strs{K}$, if $\thr{A}{B}{r}{t}$, then $\bqe{A}{B}{J}$,
 that is, $\strs{J}$ is closed under $r,t$-equivalence with respect to structures in \strs{K}.
\item 
Let $r\in\omega$. $A$ is \emph{Hanf $r$-equivalent to $B$} (written $\hre{A}{B}{r}$) if and only if $\thr{A}{B}{r}{\omega}$. (Thus, if $A$ and $B$ are countable and Hanf $r$-equivalent, there is an isomorphism preserving bijection between their pointed $r$-neighborhoods.)
\item 
A class of $\tau$-structures $\strs{J}$ is \emph{Hanf $r$-local on $\strs{K}$} if and only if for all $A,B\in \strs{K}$, if $\hre{A}{B}{r}$, then $\bqe{A}{B}{J}$, that is, $\strs{J}$ is closed under $r$-equivalence with respect to structures in \strs{K}.
\item 
$A$ is \emph{Hanf equivalent to $B$} (written $\he{A}{B}$) if and only if for all $r\in\omega$, $\hre{A}{B}{r}$, that is, $A$ and $B$ are fully locally equivalent up to threshold $\omega$.
\item 
A class of $\tau$-structures $\strs{J}$ is \emph{Hanf local on $\strs{K}$} if and only if for all $A,B\in \strs{K}$, if $\he{A}{B}$, then $\bqe{A}{B}{J}$, that is, $\strs{J}$ is closed under Hanf equivalence with respect to structures in \strs{K}. 
\end{enumerate} 
\end{definition}
\section{Hanf Locality}

In general, elementary boolean queries are not Hanf local on arbitrary structures. For example, the simple graphs $G$ and $H$ consisting of a single countable clique and the disjoint union of two countable cliques are distinguished by a first-order sentence, yet $\he{G}{H}$. On the other hand, \cite{Han65} showed that every elementary class, that is, a class defined by a set of first-order sentences, is Hanf local on the class of locally finite structures.
\begin{theorem}[Hanf]\label{Han65-thm}
For every $\tau$ and $A,B\in\strs{K}^{<\omega}_\tau$, if $\he{A}{B}$, then $A\equiv B$.
\end{theorem}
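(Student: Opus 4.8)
The plan is to prove this by an Ehrenfeucht--\fraisse game argument with exponentially shrinking locality radii, in the style of the later locality theorems of \cite{FSV95}. By \fraisse's theorem it suffices to show, for every $n\in\omega$, that Duplicator has a winning strategy in the $n$-round game on $A$ and $B$; we assume for notational convenience that $\tau$ is relational (constant symbols require only routine bookkeeping: place their interpretations as fixed pebbles in every position). Fix $n$ and set $r_i=(3^{\,n-i}-1)/2$ for $0\le i\le n$; these are nonnegative integers with $r_n=0$ and $r_i=3r_{i+1}+1$. Writing $\gcb{A}{\bar a}{r}$ for the substructure of $A$ induced by $\bigcup_j\cb{G_A}{a_j}{r}$ with the $a_j$ distinguished, Duplicator will maintain the invariant that after round $i$, with $\bar a\in A^i$ and $\bar b\in B^i$ the tuples played so far, $\gcb{A}{\bar a}{r_i}\cong\gcb{B}{\bar b}{r_i}$ (matching distinguished points). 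The invariant holds vacuously at $i=0$ (empty substructures), and at $i=n$ it says $r_n=0$, i.e.\ $\bar a\mapsto\bar b$ is a partial isomorphism, so Duplicator wins.

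For the inductive move, suppose the invariant holds after round $i$ via an isomorphism $f$ and Spoiler plays some $a'\in A$ (a move in $B$ is symmetric). If $\dist{a'}{a_j}{G_A}\le 2r_{i+1}+1$ for some $j$, then because $r_i=3r_{i+1}+1$ — and because the coordinates of any single relation-tuple form a clique in the Gaifman graph, so a relation-tuple that meets $\cb{G_A}{\bar a}{r_i-1}$ is contained in $\cb{G_A}{\bar a}{r_i}$ — the radius-$r_{i+1}$ ball of $a'$ sits inside the domain of $f$, and $f$ restricts to an isomorphism of $\gcb{A}{\bar a a'}{r_{i+1}}$ onto $\gcb{B}{\bar b\,f(a')}{r_{i+1}}$; Duplicator answers $b'=f(a')$. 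Otherwise $\dist{a'}{a_j}{G_A}\ge 2r_{i+1}+2$ for all $j$, and then $\gcb{A}{\bar a a'}{r_{i+1}}$ is the disjoint union of $\gcb{A}{\bar a}{r_{i+1}}$ and the pointed neighbourhood $C:=\gcb{A}{a'}{r_{i+1}}$ (no relation-tuple can span the two parts, as that would force $\dist{a'}{a_j}{G_A}\le 2r_{i+1}+1$). Since restricting $f$ gives $\gcb{A}{\bar a}{r_{i+1}}\cong\gcb{B}{\bar b}{r_{i+1}}$, it now suffices for Duplicator to find $b'\in B$ realizing the pointed type $C$ with $\dist{b'}{b_j}{G_B}\ge 2r_{i+1}+2$ for all $j$: then $\gcb{B}{\bar b b'}{r_{i+1}}=\gcb{B}{\bar b}{r_{i+1}}\sqcup C\cong\gcb{A}{\bar a a'}{r_{i+1}}$.

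Finding this far witness is where $\hre{A}{B}{r_{i+1}}$ (a consequence of $\he{A}{B}$) and local finiteness enter, and it is the part I expect to require the most care. If $C$ has infinitely many realizers in $A$, it has infinitely many in $B$, and since $B$ is locally finite only finitely many of them lie within distance $2r_{i+1}+1$ of $\bar b$, so a far realizer exists. If $C$ has exactly $k<\omega$ realizers in $A$, it has exactly $k$ in $B$ — here one needs the actual equality of counts, not just agreement up to the threshold. The crux is that $f$ restricts to a \emph{bijection} between the realizers of $C$ within distance $2r_{i+1}+1$ of $\bar a$ and those within distance $2r_{i+1}+1$ of $\bar b$: for such a realizer the entire radius-$r_{i+1}$ ball lies inside $\cb{G_A}{\bar a}{r_i}$ (again using $r_i=3r_{i+1}+1$ and the clique remark), and $f$ transports it faithfully, so the two sets of ``nearby'' realizers have a common size $m$. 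As $a'$ is itself a realizer of $C$ that is \emph{not} nearby, there are $k-m\ge 1$ realizers of $C$ in $A$ far from $\bar a$, hence $k-m\ge 1$ in $B$ far from $\bar b$, which provides the desired $b'$. The geometric bookkeeping in this counting step — ensuring that the radius-$r_i$ isomorphism faithfully records the radius-$r_{i+1}$ neighbourhoods of all points near the current tuple, so that the exact-count content of Hanf equivalence can be deployed — together with the essential appeal to local finiteness (without which the countable-clique example already violates the conclusion) is the main obstacle.
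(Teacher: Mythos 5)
The paper does not actually prove this statement: it is quoted as Hanf's classical theorem and attributed to \cite{Han65}, so there is no in-paper argument to compare against. Your proof is a correct, self-contained argument, and it is essentially the now-standard route to Hanf's theorem via Ehrenfeucht--\fraisse\ games with geometrically shrinking radii (the method of \cite{FSV95}, as presented in \cite{Libkin04}), rather than Hanf's original back-and-forth through special models. The two delicate points are both handled properly. First, the boundary bookkeeping: an edge of $G_A$ between two points of $\cb{G_A}{\bar a}{r_i}$ need not survive in the Gaifman graph of the induced substructure if its witnessing tuple pokes outside the ball, but your clique remark closes this, because every edge on a geodesic of length $\le r_{i+1}$ issuing from $a'$ (or from $a_j$) emanates from a point at distance at most $r_i-1$ from $\bar a$, so its witnessing tuple stays inside the domain of $f$; this is what makes $f$ genuinely distance-preserving on the relevant balls and hence makes both the ``near'' case and the bijection-of-nearby-realizers count go through. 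Second, the ``far witness'': the case split between infinitely many realizers (where local finiteness guarantees only finitely many lie near $\bar b$) and exactly $k$ realizers (where the threshold-$\omega$ clause of Hanf equivalence forces exact equality of counts, and the bijection of nearby realizers yields $k-m\ge 1$ far ones on each side) is exactly where both hypotheses of the theorem are consumed, and your accounting is right. The only loose thread is the treatment of constant symbols, which you defer to ``routine bookkeeping''; since pointed neighborhoods and Hanf equivalence as defined in the paper do not track constants, a fully careful version should either assume $\tau$ relational (as is customary for this theorem) or start the game with the constants pre-played and verify the round-$0$ invariant for them. This is a presentational gap, not a mathematical one.
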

Note that for finite structures $A$ and $B$, $\he{A}{B}$ if and only if $A\cong B$; thus Hanf equivalence is not a useful tool for analyzing the expressive power of logical languages over structures which are actually finite, as opposed to merely locally finite. \cite{FSV95} proved that classes of structures defined by single first-order sentences $\theta$ over $\strs{F}_\tau$ are Hanf $r$-local, for large enough $r$, depending only on the quantifier rank of $\theta$. 
\begin{theorem}[Fagin, Stockmeyer, \& Vardi]\label{FSV95-thm}
For every $\tau$ and for every first-order sentence $\theta$ with signature $\tau$, there is an $r\in\omega$ such that for all $A,B\in\strs{F}_\tau$, if $\hre{A}{B}{r}$, then $A\equiv_\theta B$.
\end{theorem}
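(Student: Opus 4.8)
The plan is to prove the statement by an Ehrenfeucht--\fraisse\ game argument in the style of Hanf's locality technique. Let $q$ be the quantifier rank of $\theta$ and set $r=\tfrac{3^q-1}{2}$ (any larger value works equally well). First note that since $A$ and $B$ are finite, $\hre{A}{B}{r}$ simply says that each isomorphism type of pointed $r$-neighborhood is realized the same \emph{finite} number of times in $A$ as in $B$ --- the threshold $\omega$ collapses to exact equality. Next, observe the folklore monotonicity $\hre{A}{B}{r}\Rightarrow\hre{A}{B}{s}$ for every $s\le r$: the isomorphism type of $\gcb{A}{a}{r}$ determines that of $\gcb{A}{a}{s}$ (distances to the centre, and shortest paths realizing them, stay inside the ball), so the count of $s$-types is obtained by summing counts of $r$-types over a common index set, and equal summands give equal sums. (If $\tau$ has constant symbols we may pass to a relational signature in the usual way, treating the interpretations of the constants as finitely many pebbles placed before play begins, which requires only a routine matching condition near the constants; we suppress this and assume henceforth that $\tau$ is relational.) By the Ehrenfeucht--\fraisse\ theorem it now suffices to exhibit a winning strategy for Duplicator in the $q$-round game on $A$ and $B$: this gives that $A$ and $B$ agree on every first-order sentence of quantifier rank at most $q$, in particular $A\equiv_\theta B$.

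Set $r_i=\tfrac{3^{q-i}-1}{2}$, so that $r_0=r$, $r_q=0$, and $r_i=3r_{i+1}+1$. Duplicator maintains the invariant $(\ast_i)$: after round $i$, with pebbles $\tup a=a_1,\dots,a_i$ on $A$ and $\tup b=b_1,\dots,b_i$ on $B$, there is an isomorphism $h$ from the $r_i$-neighborhood of $\tup a$ in $A$ (the substructure induced on $\bigcup_j\cb{G_A}{a_j}{r_i}$, with the $a_j$ distinguished) onto the $r_i$-neighborhood of $\tup b$ in $B$ carrying each $a_j$ to $b_j$. The base case $(\ast_0)$ is vacuous. Given $(\ast_i)$ and Spoiler's move --- say $c\in A$ in round $i+1$, the other case being symmetric --- put $s=r_{i+1}$, so $r_i=3s+1$, and distinguish two cases according to $\dist{c}{\tup a}{G_A}$.

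If $c$ is \emph{close}, i.e.\ $\dist{c}{a_j}{G_A}\le 2s+1$ for some $j$, then $\cb{G_A}{c}{s}$ is contained in $\cb{G_A}{a_j}{3s+1}$, hence inside the $r_i$-neighborhood of $\tup a$; Duplicator plays $d:=h(c)$, and $h$ restricted to the $s$-neighborhood of $\tup a c$ witnesses $(\ast_{i+1})$, since all relevant distances and relation tuples are computed inside the $r_i$-ball, where $h$ is an isomorphism. If $c$ is \emph{far}, i.e.\ $\dist{c}{a_j}{G_A}>2s+1$ for all $j$, then $\cb{G_A}{c}{s}$ is disjoint from $\bigcup_j\cb{G_A}{a_j}{s}$ and no Gaifman edge joins the two, so the $s$-neighborhood of $\tup a c$ is the disjoint union of pointed structures $\gcb{A}{\tup a}{s}\sqcup\gcb{A}{c}{s}$. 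Let $\sigma$ be the isomorphism type of $\gcb{A}{c}{s}$. Restricting $h$ shows that the elements of $A$ within distance $2s+1$ of $\tup a$ and the elements of $B$ within distance $2s+1$ of $\tup b$ are in bijection via a map preserving the isomorphism type of the $s$-neighborhood of the element; combining this with $\hre{A}{B}{s}$ (equal total number of realizations of $\sigma$ in $A$ and $B$) and subtracting, the number of elements of $A$ realizing $\sigma$ at distance $>2s+1$ from $\tup a$ equals the corresponding number for $B$. Since $c$ itself is such an element of $A$, there is such a $d$ in $B$; Duplicator plays it, and $(\ast_{i+1})$ holds by gluing the restriction of $h$ to $\gcb{A}{\tup a}{s}$ with a witnessing isomorphism $\gcb{A}{c}{s}\cong\gcb{B}{d}{s}$.

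After $q$ rounds, $(\ast_q)$ is an isomorphism of the $0$-neighborhoods of $\tup a$ and $\tup b$ sending $a_j\mapsto b_j$, i.e.\ the map $a_j\mapsto b_j$ is a partial isomorphism of $A$ and $B$; so Duplicator wins, completing the proof. The step I expect to be the crux is the \emph{far} case: one must secure a reply for Duplicator that simultaneously realizes the correct $s$-neighborhood type \emph{and} lies far from Duplicator's own pebbles, and this is exactly where the neighborhood-count hypothesis (via the monotonicity to radius $s$) and the geometric shrinkage $r_i=3r_{i+1}+1$ are used --- the factor-$3$ gap providing enough room that in the close case the new $s$-ball sits inside the old $r_i$-ball, while in the far case it is cleanly separated from the old $s$-neighborhood. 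The remaining work --- verifying that the glued maps are genuine isomorphisms of the new pointed neighborhoods (no relation tuple straddles old and new parts) and that the relevant distances are preserved --- is routine bookkeeping.
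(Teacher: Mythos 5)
The paper does not actually prove this statement: it is quoted from \cite{FSV95}, with the remark that those authors derived it from the threshold version (Theorem \ref{FSV95-threshold-thm}) --- for finite $A,B$ with $\hre{A}{B}{r}$ one bounds both degrees by some $d$ and observes that $\thr{A}{B}{r}{t}$ then holds for the $t$ supplied for that $d$ --- and that \cite{Libkin04} gives a direct proof. Your argument is, in essence, that direct proof: a $q$-round Ehrenfeucht--\fraisse\ game in which Duplicator maintains an isomorphism between shrinking neighborhoods of the pebbled tuples, with radii contracting via $r_i=3r_{i+1}+1$, the close/far dichotomy at distance $2s+1$, and, in the far case, a counting argument (exact equality of $s$-neighborhood counts, minus the $h$-matched close realizations) producing a scattered reply of the correct $s$-type. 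The steps you defer as bookkeeping do go through: paths of length at most $s$ from the new pebble stay inside the old $r_i$-ball in the close case, relation tuples are cliques in the Gaifman graph so nothing straddles the two parts in the far case, and the monotonicity $\hre{A}{B}{r}\Rightarrow\hre{A}{B}{s}$ for $s\le r$ holds as you say; your use of finiteness both to collapse the threshold $\omega$ to exact equality and to license the subtraction is exactly where the hypothesis $A,B\in\strs{F}_\tau$ enters. Compared with the paper's implicit route, your proof is self-contained but gives no threshold information, whereas the threshold route yields Theorem \ref{FSV95-threshold-thm} as a by-product (with $t$ depending on $d$), which is what the paper actually needs later. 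One caveat: the paper's $\tau$ is permitted to contain constant symbols, and the ``routine matching condition near the constants'' you invoke is \emph{not} implied by $\hre{A}{B}{r}$ as the paper defines pointed neighborhoods (they do not record which element interprets a constant); two two-element structures with a single unary predicate and differently placed constant realize identical pointed $r$-neighborhoods for every $r$ yet disagree on $P(c)$. This is a defect of the statement's generality rather than of your argument; for relational $\tau$, which is the standard setting and the one in which the paper uses the result, your proof is complete.
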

Theorem \ref{FSV95-thm} provides a valuable technique for establishing that many well-known combinatorial properties of finite structures are not first-order definable. 
\cite{FSV95} derived Theorem \ref{FSV95-thm} from the following threshold locality theorem for first-order logic.
\begin{theorem}[Fagin, Stockmeyer, \& Vardi]\label{FSV95-threshold-thm}
For every $\tau$ and for every first-order sentence $\theta$ with signature $\tau$, there is an $r\in\omega$ such that for all $d\in\omega$, there is a $t\in\omega$, such that for all $A,B\in\strs{F}^d_\tau$, if $\thr{A}{B}{r}{t}$, then $A\equiv_\theta B$.
\end{theorem}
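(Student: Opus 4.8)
The plan is to recast the conclusion $A \equiv_\theta B$ in game-theoretic terms. Write $q$ for the quantifier rank of $\theta$ and put $r := (3^q-1)/2$ --- this depends only on $q$, as the statement requires. It then suffices to show that, for a threshold $t = t(q,d)$ to be fixed below, $\thr{A}{B}{r}{t}$ implies that the Duplicator wins the $q$-round Ehrenfeucht--Fra\"{\i}ss\'{e} game on $A$ and $B$; by the Ehrenfeucht--Fra\"{\i}ss\'{e} theorem this gives that $A$ and $B$ agree on all sentences of quantifier rank $\le q$, in particular on $\theta$. For the bookkeeping, set $r_k := (3^k-1)/2$ for $0 \le k \le q$, so $r_q = r$, $r_0 = 0$, and $r_k = 3r_{k-1}+1$. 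Since a structure of degree at most $d$ satisfies $|\cb{G_A}{a}{\rho}| \le N(d,\rho) := 1 + d + \cdots + d^\rho$, define thresholds by $t_0 := 1$ and $t_k := t_{k-1} + q\cdot N(d, 2r_{k-1}+1) + 1$, and put $t := t_q$, which depends only on $q$ and $d$.

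The Duplicator maintains the invariant that, after $m$ rounds (so $k := q - m$ remain) with pebbles $\overline{a} \in A^m$, $\overline{b} \in B^m$: \emph{(i)} the map $a_i \mapsto b_i$ extends to an isomorphism $\gcb{A}{\overline{a}}{r_k} \cong \gcb{B}{\overline{b}}{r_k}$; and \emph{(ii)} $\thr{A}{B}{r_k}{t_k}$. For $m = 0$ this is the hypothesis (clause (i) being vacuous), and for $m = q$, clause (i) at radius $r_0 = 0$ says $\overline{a}$ and $\overline{b}$ realize the same atomic type, so the Duplicator has won. For the inductive step, suppose the Spoiler plays $a \in A$ (the case of a move in $B$ is symmetric). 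If $a$ is within distance $2r_{k-1}+1$ of $\overline{a}$, the Duplicator answers with the $\phi$-image $b$ of $a$, $\phi$ being the isomorphism from clause (i); the recursion $r_k = 3r_{k-1}+1$ gives $\cb{G_A}{\overline{a}\,a}{r_{k-1}} \subseteq \cb{G_A}{\overline{a}}{r_k}$ and that $\phi$ carries this ball isomorphically onto $\cb{G_B}{\overline{b}\,b}{r_{k-1}}$, re-establishing (i) at radius $r_{k-1}$; and (ii) descends from level $k$ to level $k-1$ by the easy fact that equipollence up to threshold $t_k$ of $r_k$-neighborhood multiplicities entails the same for $\rho$-neighborhoods whenever $\rho \le r_k$ (each coarser count is a sum of finer ones, and a sum of $\sim_{t_k}$-related cardinalities is $\sim_{t_k}$-related), together with $t_{k-1} \le t_k$.

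The case requiring care is when $a$ lies at distance more than $2r_{k-1}+1$ from $\overline{a}$. Let $\rho$ be the isomorphism type of $\gcb{A}{a}{r_{k-1}}$. The Duplicator wants $b \in B$ with $\gcb{B}{b}{r_{k-1}} \cong \rho$ and distance more than $2r_{k-1}+1$ to $\overline{b}$: for then $a$ and $b$ are each ``$r_{k-1}$-isolated'' from their pebbles, so clause (i) at level $k-1$ follows by gluing the restriction of $\phi$ to a witnessing isomorphism $\gcb{A}{a}{r_{k-1}} \cong \gcb{B}{b}{r_{k-1}}$, while (ii) descends as before. To find $b$: via $\phi$, the realizations of $\rho$ within distance $2r_{k-1}+1$ of $\overline{a}$ are in bijection with those within distance $2r_{k-1}+1$ of $\overline{b}$ (their $r_{k-1}$-neighborhoods lie inside the domain, resp.\ range, of $\phi$, since $(2r_{k-1}+1)+r_{k-1} = r_k$); let $p$ be this common number, and note $p \le q\cdot N(d, 2r_{k-1}+1) < t_k$. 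Since $A$ has a realization of $\rho$ at distance more than $2r_{k-1}+1$ from $\overline{a}$ (namely $a$), it has at least $p+1$ realizations of $\rho$ in all; by clause (ii), $B$ then has either exactly that many or at least $t_k$ of them, and in either case more than $p$, so at least one of $B$'s realizations of $\rho$ avoids the (at most $p$) elements within $2r_{k-1}+1$ of $\overline{b}$. Take such a $b$; both clauses are restored at level $k-1$, completing the induction and the proof.

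I expect this last case to be the main obstacle: one must verify that the neighborhood isomorphism (i) and the structure-wide multiplicity matching (ii) are restored simultaneously, and the role of carrying (i) is precisely to ``account for'' realizations of a possibly rare neighborhood type that abut the pebbles, so that a surplus realization the Spoiler finds far from its pebbles is matched by one the Duplicator can place far from its pebbles. The bounded-degree hypothesis enters exactly here: the region consumed by the current pebbles has size $O(q\cdot N(d,r))$, a function of $q$ and $d$ alone, so a finite $t$ works; and since $N(d,r)$ is exponential in $r = (3^q-1)/2$ and polynomial in $d$, the resulting $t$ is exponential in $d$ for fixed $\theta$, as the introduction notes. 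The need for bounded degree is already visible in Hanf's Theorem~\ref{Han65-thm}, which over arbitrary locally finite structures uses the threshold $\omega$; and although a soft ultraproduct argument combined with Theorem~\ref{Han65-thm} also proves the present statement, it does so only with $r$ permitted to depend on $d$ --- the uniform radius $(3^q-1)/2$ is what the explicit game argument buys.
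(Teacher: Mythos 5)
The paper does not prove this statement: it is quoted as a known theorem of Fagin, Stockmeyer, and Vardi, with the reader referred to \cite{FSV95} and \cite{Libkin04} for proofs. Your argument is essentially the standard Ehrenfeucht--Fra\"{\i}ss\'{e} game proof given in those sources (radius $r_k=(3^k-1)/2$, Duplicator maintaining an isomorphism of $r_k$-neighborhoods of the pebbled tuples, with the degree bound controlling how many realizations of a neighborhood type the pebbles can ``use up''), and it is correct; the only quibble is the closing aside that $t$ is ``exponential in $d$'' --- your own bound $t_q\le 1+q\sum_k (N(d,2r_{k-1}+1)+1)$ is polynomial in $d$ of degree exponential in $q$, which does not affect the theorem since only existence of $t$ is claimed.
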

Here too, the locality radius may be chosen to depend only on the quantifier rank of $\theta$, whereas the threshold $t$ depends also on the degree bound $d$ on the Gaifman graphs of the structures. It is easy to verify that the argument of \cite{FSV95} suffices to establish Theorems \ref{FSV95-threshold-thm} for arbitrary structures of bounded degree, not just finite structures, that is, for $\strs{K}^d_\tau$ in place of $\strs{F}^d_\tau$. \cite{Libkin04} presents a direct proof of Theorem \ref{FSV95-thm}, with generalizations to more powerful logical languages, and much illuminating discussion of locality and its applications.

\section{Hanf Locality of Queries Invariantly Definable over Locally Finite Structures}

In logic, a mathematical object, such as a simple graph, is typically identified with an abstract relational structure: in the case of a simple graph, this consists of a set of vertices paired with an irreflexive and symmetric edge relation between them. But this identification is hardly universal throughout mathematics and computer science. For example, a graph is often presented as a drawing of nodes in the plane together with arcs joining them to indicate the edges, or as an adjacency matrix whose axes are labelled by the vertices in some arbitrary order and whose entries indicate the presence or absence of an edge. In this section, we will focus attention on the connection between an abstract structure and such presentations of it. In particular, we will investigate invariant elementary definability over classes of finite and of locally finite structures with respect to various such schemes of presentation. 

A special case of presentation invariant definability has been studied in the context of database theory. The ``Data Independence Principle'' (see \cite{AHV}) states that the results of a database query must be independent of (that is, invariant with respect to) the arbitrary order via which the entries in the database are encoded. The importance of this condition has given rise to sustained investigation of the properties of queries that are (linear) order-invariant definable over finite structures in various logical languages that underly database query languages. Linear orders are a very special kind of presentation of a structure that involves only the underlying universe of the structure while ignoring its relational component. Presentations of this general character are often called ``built-in numerical relations,'' and locality of queries invariantly definable with respect to such presentations has been studied intensively in finite model theory (see, for example, \cite{GS00} and \cite{AMSS12}). In this section we will introduce more general schemes of presentation.   
\begin{definition}
Let $A\in \strs{K}_\tau$ and $R$ be a relation symbol (of some fixed but unspecified arity). 
\begin{enumerate}
\item
An \emph{$R$-presentation} of $A$ is an expansion of $A$ to a structure $A^*$ of similarity type $\tau\cup\{R\}$. 
\item
An \emph{$R$-presentation scheme} $\pres{P}$ for a class of structures $\strs{K}\subseteq\strs{K}_\tau$ is a collection of $R$-presentations of members of $\strs{K}$ that contains at least one presentation of each member of $\strs{K}$. We sometimes write $A^*\pres{P}A$ to abbreviate $A^*\in \pres{P}$ and $A^*$ is an $R$-presentation of $A$.
\item
An  $R$-presentation scheme $\pres{P}$ of a collection of structures $\strs{K}\subseteq\strs{K}_\tau$  is an \emph{elementary presentation scheme} if and only if there is a first order sentence $\theta$ such that for every $A\in\strs{K}$ and every $R$-presentation $A^*$ of $A$, 
\[A^*\in \pres{P} \iff A^*\models \theta.\] 
\end{enumerate}  
\end{definition}

\begin{example}
Let \strs{G}\ be the collection of finite simple graphs.
\begin{enumerate}
\item For every $G\in \strs{G}$, $\op{G}{<}\in\pres{L}(\strs{G})$ if and only if $<$ linearly orders the vertices of $G$. We call this the \emph{linear presentation} of finite graphs.
\item For every $G\in \strs{G}$, $\op{G}{<}\in\pres{T}(\strs{G})$ if and only if $<$ is a \emph{traversal} of $G$. (Recall that $<$ is a traversal of a graph $G$ just in case it is a linear ordering of the vertices of $G$ with the additional property that the connected components of $G$ occupy disjoint intervals, and that in each such interval, every node but the first has a preceding neighbor. It is well-known (see \cite{CorneilK08}) that $<$ is a traversal of a graph $G$ if and only if it satisfies the following first-order condition:
for all $a,b,c\in G$, if $a<b<c$ and $Eac$, then there is a $d\in G$ such that $d<b$ and $Edb$.) We call this the \emph{traversal presentation} of finite graphs. 
\item A ternary relation $O$ on the vertex set of a graph $G$ \emph{locally orders} $G$ if and only if for every $a\in G$, the binary relation $\{\op{b}{c}\mid Oabc\}$ is a strict linear ordering of the $G$-neighbors of $a$. For every $G\in\strs{G}$, $\op{G}{O}\in\pres{O}(\strs{G})$ if and only if $O$ locally orders $G$. We call this the \emph{local order presentation} of finite graphs.
\end{enumerate}
\end{example}
It is easy to see that all these examples are elementary presentation schemes.
Moreover, each of these examples can be extended to collections of finite $\tau$-structures via their Gaifman graphs. We next explore the power of presentations to reveal information about the structures they present. We characterize this power in terms of presentation-invariant elementary definability.
\begin{definition}\label{presinv-defn}
Let \pres{P}\ be an elementary $R$-presentation scheme for a class of structures $\strs{K}\subseteq\strs{K}_\tau$.
\begin{enumerate}
\item A first order sentence $\theta$ in the signature $\tau\cup\{R\}$ is \pres{P}-invariant over \strs{K}\ if and only for every $A\in\strs{K}$, 
$(\exists A^*) (A^*\pres{P}A\wedge A^*\models\theta)$ if and only if\\
 $(\forall A^*)(A^*\pres{P}A\rightarrow A^*\models\theta)$.
\item $\strs{J}\subseteq\strs{K}$ is \pres{P}-invariant elementary on \strs{K}\ if and only if there is a first order sentence $\theta$ that is \pres{P}-invariant on \strs{K}\ and for every $A\in\strs{K}$
\[A\in\strs{J}\ \ \iff\ \  (\exists A^*)(A^*\pres{P}A\wedge A^*\models\theta).\]
\end{enumerate}
\end{definition}
It follows from the the Beth Definability Theorem that a boolean query is \pres{P}-invariant elementary over the class of arbitrary structures if and only if it is elementary, for any elementary presentation scheme \pres{P}. On the other hand, a well-known example due to Gurevich (first exposed in Exercise 17.27 of \cite{AHV}), namely the collection of \emph{finite} boolean algebras with an even number of atoms, shows that there are queries \pres{L}-invariant elementary over the collection of finite structures that are not elementary. Observe that this collection is also \pres{O}-invariant elementary. It is easy to see that  connectivity is \pres{T}-invariant elementary over the the class of finite simple graphs. On the other hand, \cite{Gurevich84} shows that connectivity is not \pres{L}-invariant elementary.

As the example of traversals demonstrates, we will need to consider ``tame'' classes of elementary presentations in order to establish locality of invariantly definable boolean queries. The first tameness condition we introduce, neighborhood boundedness, excludes presentations which themselves have ``global reach'', that is, neighbors of a point in the (Gaifman graph) of the presentation of a structure cannot be arbitrarily far way from that point in the (Gaifman graph) of the structure itself.  
\begin{definition}\label{nb-pres-def}
  Suppose \pres{P}\ is a presentation scheme for the class of all locally finite 
$\tau$-structures  $\strs{K}^{<\omega}_\tau$. 
\begin{enumerate}
\item
We say $\mathbb{P}$ is \emph{neighborhood bounded} if and only if there is a constant $\nu$ (called the \emph{neighborhood expansion factor} for \pres{P}) such that for every $A\in\mathfrak{K}^{<\omega}_\tau$, for every $A^*$ such that $A^*\mathbb{P}A$, and
  for every $a\in A$,  
 $\cb{G_{A^*}}{a}{1}\subseteq\cb{G_{A}}{a}{\nu}$.
\item
We say $\mathbb{P}$ is \emph{degree bounded} if and only if there is a function $g:\omega\mapsto\omega$ (called a \emph{degree bound} for \pres{P}) such that for every $d,A,A^*$, if $A\in\strs{K}^d_\tau$ and $A^*\pres{P}A$, then $A^*\in \strs{K}^{g(d)}_{\tau\cup R}$.
\end{enumerate}
\end{definition}
Observe that if \pres{P} is neighborhood bounded with neighborhood expansion factor $\nu$ then for every $A\in\mathfrak{K}^{<\omega}_\tau$, every $A^*$ such that $A^*\pres{P}A$, every $a\in A$, and every $r\in\omega$, $\cb{G_{A^*}}{a}{r}\subseteq\cb{G_{A}}{a}{\nu\cdot r}$. Note also, that if \pres{P}\ is neighborhood bounded, then for every $A\in\mathfrak{K}^{<\omega}_\tau$ and for every $A^*$ such that $A^*\mathbb{P}A$, $A^*\in\mathfrak{K}^{<\omega}_{\tau\cup\{R\}}$. 

Note \pres{O}, the presentation scheme of locally ordered finite graphs, is neighborhood bounded (with neighborhood expansion factor $\nu=2$), while \pres{L}, the presentation scheme of linearly ordered finite graphs, is not (the Gaifman graph of a linearly ordered set is a clique). As the results of \cite{GS00} and \cite{AMSS12} demonstrate, the failure of a presentation to be neighborhood bounded does not preclude locality of boolean queries invariantly definable with respect to it. 
\begin{remark}\label{degbd-remark}
Observe that every neighborhood bounded presentation \pres{P}\ is degree bounded. In particular, suppose that $\nu$ is the neighborhood expansion factor of \pres{P}, and that $A\in\strs{K}^d_\tau$ and $A^*\pres{P}A$. For any point $a\in A^*$, the degree of $a$ is just the size of the neighborhood $\cb{A^*}{a}{1} - 1$. But, this is bounded by the size of the neighborhood $\cb{A}{a}{\nu} - 1$ which is bounded by $d^{\nu}$.
\end{remark}

A crucial tool in the proof of Theorem \ref{nbpresinvHanf-thm} will be the use of the \emph{local type} of a point, that is, the collection of formulas with all quantifiers bounded to some $r$-neighborhood of the point. Note that for any finite relational signature $\tau$, there is a first-order formula $\delta_{\tau}^r(x,y)$ such that for every $\tau$-structure $A$ and $a,b\in A$, 
$A\models \delta_{\tau}^r[a,b]$ if and only if the distance between $a$ and $b$ in $G_A$ is at most $r$. If $\varphi(x_1, \ldots, x_k)$ is a formula, we write $\varphi^r(x_1, \ldots, x_k)$ for the formula which results from $\varphi(x_1, \ldots, x_k)$ by relativizing all its quantifiers $(Qy)$ to the formula $\delta_{\tau}^r(x_1,y)\vee\ldots\vee\delta_{\tau}^r(x_k,y)$. We call such a formula an \textit{$r$-local formula}. If $\overline{a}$ is a $k$-tuple of elements of $A$, the \textit{$r$-local type} of $\overline{a}$ in $A$ is the set of formulas $\varphi^r(\overline{x})$ such that $A(\overline{a})\models\varphi^r(\overline{a})$. Note that for structures $A\in\strs{K}^{<\omega}_\tau$ and $a\in A$, the $r$-local type of $a$ in $A$ is determined by a single $r$-local formula which characterizes the \textit{finite} structure $\gcb{A}{a}{r}$ up to isomorphism. We write $\rloct{r}{a}{A}(x)$ for the $r$-local type of $a$ in $A$. The \textit{local type} of $\overline{a}$ in $A$ is the union of the $r$-local types of $\overline{a}$ in $A$, for all $r\in\omega$. We write $\loct{a}{A}$ for the local type of $a$ in $A$. It is easy to see that the connected component of a point determines its local-type, since any quantifiers in an $r$-local formula range only over the connected component; Theorem \ref{nbpresinvHanf-thm} exploits the fact that the converse holds in locally finite structures. 

The next theorem establishes that the original form of Hanf's Theorem extends to boolean queries that are first-order invariantly definable with respect to neighborhood bounded presentations, that is, if two structures are Hanf equivalent for every finite radius up to countable threshold, then they agree on every such query.
\begin{theorem}\label{nbpresinvHanf-thm}
  Let $\mathbb{P}$ be an elementary neighborhood bounded presentation and $\mathfrak{J}\subseteq\mathfrak{K}_\tau$ a $\mathbb{P}$-invariant elementary boolean query on $\mathfrak{K}_\tau^{<\omega}$.  If $A,B\in\mathfrak{K}^{<\omega}_\tau$ and $A\approx B$, then $\bqe{A}{B}{J}$.
\end{theorem}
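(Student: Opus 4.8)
The plan is to reduce the statement to Hanf's original theorem (Theorem~\ref{Han65-thm}) applied to the \emph{presentations} rather than the structures themselves. Suppose $A, B \in \strs{K}^{<\omega}_\tau$ with $\he{A}{B}$, and let $\theta$ be a $\pres{P}$-invariant first-order sentence defining $\strs{J}$; let $\psi$ be the first-order sentence characterizing membership in $\pres{P}$. Pick presentations $A^* \pres{P} A$ and $B^* \pres{P} B$. Since $\pres{P}$ is neighborhood bounded with factor $\nu$, both $A^*$ and $B^*$ are locally finite $(\tau \cup \{R\})$-structures. If I can show $A^* \approx B^*$ (Hanf equivalence of the presentations), then Hanf's Theorem gives $A^* \equiv B^*$; since $A^* \models \psi$ forces $B^* \models \psi$ (so $B^*$ is indeed in $\pres{P}$ — this is automatic here since $B^*$ was chosen from $\pres{P}$, but the point is the elementary equivalence transfers $\theta$), we get $A^* \models \theta \iff B^* \models \theta$, and $\pres{P}$-invariance then yields $A \in \strs{J} \iff A^* \models \theta \iff B^* \models \theta \iff B \in \strs{J}$, i.e.\ $\bqe{A}{B}{J}$.

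So the real content is: \emph{$\he{A}{B}$ implies $A^* \approx B^*$ for suitable choices of presentations.} This is the main obstacle, because Hanf equivalence of $A$ and $B$ only gives an isomorphism-preserving bijection between \emph{pointed $r$-neighborhoods} (equivalently, since $A,B$ are locally finite, between connected components of the Gaifman graphs, matching each component of $G_A$ with an isomorphic component of $G_B$), and this says nothing a priori about how the $R$-relation is laid out. The idea is to use this component-matching to transport presentations: given the bijection pairing each component $\MO{C}$ of $G_A$ with an isomorphic component $\MO{C}'$ of $G_B$, fix isomorphisms $f_{\MO{C}} : \MO{C} \to \MO{C}'$. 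By neighborhood boundedness, $G_{A^*}$-edges stay within $\cb{G_A}{\cdot}{\nu}$, so each component of $G_{A^*}$ lies inside a single component of $G_A$; hence I can push the $R$-structure of $A^*$ forward along the $f_{\MO{C}}$ to define a presentation $B^* \pres{P} B$ — I need to check $B^* \models \psi$, which holds because $\psi$ is first-order and $B^*$ is (componentwise) isomorphic to $A^*$, and $\pres{P}$-membership of a locally finite structure, being first-order definable, is determined componentwise (quantifiers in $\psi$ — no wait, $\psi$ need not be $r$-local).

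That last parenthetical flags a genuine gap that I should address carefully: a general first-order $\psi$ is not local, so I cannot conclude $B^* \in \pres{P}$ merely from componentwise isomorphism unless $A^*$ and $B^*$ are \emph{fully} Hanf equivalent and I invoke Hanf's Theorem first. The clean fix is to choose $A^*$ arbitrarily, build the componentwise pushforward $B^\circ$ of $A^*$ along the component-matching (a bona fide $(\tau\cup\{R\})$-structure expanding $B$, locally finite by neighborhood boundedness applied on the $A$ side and transported), observe directly from the construction that $A^* \approx B^\circ$ (the pushforward is built to realize exactly the same pointed $r$-neighborhood types, for every $r$, because neighborhoods in $G_{A^*}$ are contained in $G_A$-neighborhoods and the $f_{\MO{C}}$ are isomorphisms), then apply Hanf's Theorem to get $A^* \equiv B^\circ$, conclude $B^\circ \models \psi$ hence $B^\circ \pres{P} B$, and finally run the invariance argument with $B^\circ$ in place of an arbitrary presentation of $B$. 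The details to grind through are: (i) verifying that the pointed $r$-neighborhood of $a$ in $G_{A^*}$ and of $f_{\MO{C}}(a)$ in $G_{B^\circ}$ are isomorphic as pointed $(\tau\cup\{R\})$-structures — immediate once $R^{B^\circ}$ is defined as the $f$-image of $R^{A^*}$ and one uses that $\cb{G_{A^*}}{a}{r} \subseteq \cb{G_A}{a}{\nu r}$ sits inside a single component; and (ii) bookkeeping that the component-matching bijection coming from $\he{A}{B}$ can be upgraded to an honest family of isomorphisms $f_{\MO{C}}$, which uses only that two finite-or-countable isomorphic structures have an isomorphism between them.
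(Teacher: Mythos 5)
Your overall reduction---transport a presentation of $A$ to a presentation of $B$ and then apply Hanf's Theorem \ref{Han65-thm} to the presentations---has the right general shape (the paper, too, ultimately applies Theorem \ref{Han65-thm} to presentations), but it founders on exactly the step you identify as ``the real content.'' The claim that $\he{A}{B}$ yields a bijection matching each connected component of $G_A$ with an isomorphic component of $G_B$ is false for locally finite structures. Indeed, since every tuple in a relation of a $\tau$-structure lies within a single component of the Gaifman graph, such a componentwise family of isomorphisms would assemble into a global isomorphism $A\cong B$, and Hanf equivalence does not imply isomorphism outside the finite case. Concretely: let $A$ be the disjoint union of countably many one-way infinite paths and let $B$ be $A$ together with one two-way infinite path. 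For every $r$, every isomorphism type of pointed $r$-neighborhood occurring in either structure occurs countably infinitely often in both, so $\he{A}{B}$; yet the two-way path is a component of $B$ isomorphic to no component of $A$. What is true (and what the paper proves via K\"onig's Lemma) is that equality of \emph{local types of points} determines the isomorphism type of their components, and hence that components containing a point some of whose $r$-neighborhood types are realized only finitely often are matched between $A$ and $B$ with exact multiplicity; but components all of whose finite-radius neighborhood types are realized infinitely often need not be matched at all.

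The paper's proof supplies precisely the missing device: a ``universal'' structure $U$ consisting of the finitely-occurring components of $A$ (with their exact multiplicities) together with countably many copies of \emph{every} component each of whose finite-radius neighborhoods occurs infinitely often in $A$ --- including limit components occurring in neither $A$ nor $B$. This $U$ depends only on the common Hanf class, so it is the same whether built from $A$ or from $B$, and both structures embed into it. One then shows that any presentation $A^*\pres{P}A$ is Hanf equivalent to \emph{some} presentation $U^*\pres{P}U$: copy $A^*$ on the image of $A$, and present each remaining component $C$ by an inductive (K\"onig-style) choice, using neighborhood boundedness and local finiteness, of nested presented neighborhoods each occurring infinitely often in $A^*$. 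Theorem \ref{Han65-thm} and $\pres{P}$-invariance then give $\bqe{A}{U}{J}$, and symmetrically $\bqe{B}{U}{J}$. If you want to salvage your pushforward idea, you must route it through such a common extension rather than through a direct component matching between $A$ and $B$; as written, your argument in effect assumes $A\cong B$, which would make the theorem vacuous.
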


\begin{proof}
To show this, we need to observe that there is a ``universal'' structure $U$ in the Hanf-class of $A$, that is, the class of structures $C$ with $\he{A}{C}$.  Note that, in a locally finite structure $A$, the local type of a point $a$ determines its connected component (in $G_A$, the Gaifman graph of $A$). Indeed, for each $r\in\omega$, $\rloct{r}{a}{A}$ determines the isomorphism type of the finite structure $\gcb{A}{a}{r}$. Thus, if $A$ and $B$ are locally finite, with $\rloct{\infty}{A}{a}=\rloct{\infty}{b}{B}$, there is a finitely branching tree of isomorphisms from the neighborhoods $\gcb{A}{a}{r}$ onto the neighborhoods $\gcb{B}{b}{r}$, and thence, by the K\"{o}nig Infinity Lemma, an isomorphism from the pointed component  $\gcb{A}{a}{\infty}$ onto the pointed component $\gcb{B}{b}{\infty}$.  Therefore, in particular, if $A\approx B$ and $a\in A$ is a point such that, for some $r$, $A^r(a)$ appears finitely many times in $A$, then $B$ must contain connected components isomorphic to the connected component of $a$ with exactly the same multiplicity as they occur in $A$.

Consider the connected components of $A$; we may divide them into two kinds: those components which appear finitely often, and those which appear infinitely often.  We define the universal structure $U$ to consist of each of the components of $A$ which appears finitely many times in $A$ (and the same number of each such component as in $A$) together with countably many of each component such that every finite-radius neighborhood of this component appears infinitely often in $A$.  (Note that such a component might not, itself, appear in $A$.)  Note that $U$ would be the same if we constructed it based on $B$ instead. Therefore, by the assumption that $\strs{J}$ is a $\mathbb{P}$-invariant elementary boolean query, it suffices to show that, given a presentation $A^*\mathbb{P}A$, there is a presentation $U^*\pres{P}U$ so that $\he{A^*}{U^*}$.  

We construct $U^*$ as follows.  First, we simply copy each component of $A^*$ to an identical component of $U^*$ (which can be done since there is an embedding of $A$ into $U$).  Next, consider some component $C$ of $U$ which has not been assigned a presentation.  It suffices to assign a \pres{P}-presentation $C^*$ to $C$  such that each $r$-neighborhood in the presentation is present infinitely often in $A^*$.  Take any point $u\in C$ and let $\nu$ be the neighborhood expansion factor of \pres{P}. Note first that for every $r$, \gcb{C}{u}{\nu\cdot r} appears infinitely often in $A$,  and, since $A^*$ is locally finite, there are, up to isomorphism, only finitely many presentations of \gcb{C}{u}{\nu\cdot r}, so at least one of these presentations appears infinitely often in $A^*$. It follows at once that we may inductively choose a sequence of \pres{P}-presentations \gcb{C^*}{u}{r} such that for every $r$, $\gcb{C^*}{u}{r}\subseteq\gcb{C^*}{u}{r+1}$ and \gcb{C^*}{u}{r} occurs, up to isomorphism, infinitely often in $A^*$. Finally, we let $C^* = \bigcup_{r\in\omega}\gcb{C^*}{u}{r}$.
\end{proof}

Theorem \ref{nbpresinvHanf-thm} is only of interest in connection with \emph{infinite}, locally finite structures, such as arise in some approaches to software and hardware verification via model-checking; for finite $A$ and $B$, and ``logical'' presentations \pres{P}, if $A\approx B$, then $A\cong B$, thence if $A^*\mathbb{P}A$ and $B^*\mathbb{P}B$, then $A^*\cong B^*$. We do not know whether a Hanf locality result with finite threshold bound holds for boolean queries which are $\mathbb{P}$-invariant elementarily definable with respect to neighborhood bounded presentations in general. 
The next definition introduces a very natural class of \emph{local} presentation schemes, which allows us to establish a non-uniform Hanf threshold locality result for boolean queries invariantly definable both over  finite, and over locally finite, structures.   

\begin{definition}\label{local-pres-def}
  Suppose $\strs{K}\subseteq\strs{K}_\tau$ is closed under substructures and \pres{P}\ is a presentation scheme for \strs{K}. We say $\mathbb{P}$ is \emph{local} if and only if 
\begin{enumerate}
\item\label{substruc-local-cond} for all $A\in\strs{K}$, all $A^*$ with $A^*\pres{P}A$, and all finite $B\subseteq A$, $\rest{A^*}{B}\pres{P}B$, and
\item\label{disj-amalg-local-cond} for all $A\in\strs{K}$, all $B,C\subseteq A$, and all finite $B^*,C^*$, if $B\cap C=\emptyset$ and $B^*\pres{P}B$ and $C^*\pres{P}C$, then there is an $A^*$ such that $A^*\pres{P}A$ and $\rest{A^*}{B}=B^*$ and $\rest{A^*}{C}=C^*$.
\end{enumerate}
\end{definition}

We call condition (\ref{substruc-local-cond}) of Definition \ref{local-pres-def} \emph{localization}: if we restrict a presentation of a structure to a finite substructure, it is a presentation of that substructure. We call condition (\ref{disj-amalg-local-cond}) of Definition \ref{local-pres-def} \emph{disjoint local amalgamation}: presentations of  disjoint finite pieces of a structure may be combined in a presentation of the entire structure. It is easy to verify that 
both \pres{L}\ and \pres{O}\ are local. On the other hand, it is clear that \pres{T}, the presentation scheme of traversals of finite graphs, has neither localization nor disjoint local amalgamation. 
By modifying \pres{O}\ to the (elementary) presentation scheme of ``local circular successor'' on locally finite graphs, we have an example of a presentation with disjoint local amalgamation that lacks localization. On the other hand, the (elementary) presentation ``component coloring'', which requires each component to lie within, or be disjoint from, a given unary predicate, satisfies localization, but violates disjoint amalgamation. Note that both ``local circular successor'' and  ``component coloring'' are neighborhood bounded.

The next result shows that boolean queries invariantly definable with respect to local, neighborhood-bounded elementary presentations are Hanf threshold local on bounded degree structures. The result is non-uniform in the sense that the radius of locality, as well as the multiplicity threshold, depends on the degree of the structures over which the query is evaluated. 
\begin{theorem}\label{locpresthreshloc-thm}
Suppose  $\strs{K}\subseteq \strs{K}_\tau$ is closed under substructures and  \pres{P} is a local, neighborhood-bounded elementary $R$-presentation scheme for $\strs{K}$. If $\strs{J}\subseteq \strs{K}_\tau$ is a \pres{P}-invariant elementary boolean query on $\strs{K}$, then for all $d$, there are $r,t\in\omega$ such that $\strs{J}$ is Hanf $r,t$-threshold local on $\strs{K}\cap\strs{K}^d_\tau$.  
\end{theorem}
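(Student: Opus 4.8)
The plan is to reduce Theorem \ref{locpresthreshloc-thm} to the classical Fagin--Stockmeyer--Vardi threshold theorem (Theorem \ref{FSV95-threshold-thm}, in its bounded-degree form which applies to $\strs{K}^d_\tau$, not merely $\strs{F}^d_\tau$) applied to the defining sentence $\theta$ of the query in the expanded signature $\tau\cup\{R\}$. Fix $d$. Since $\strs{J}$ is $\pres{P}$-invariant elementary on $\strs{K}$, there is a first-order $\tau\cup\{R\}$-sentence $\theta$ witnessing this. Because $\pres{P}$ is neighborhood bounded it is degree bounded (Remark \ref{degbd-remark}), so any presentation $A^*$ of an $A\in\strs{K}\cap\strs{K}^d_\tau$ lies in $\strs{K}^{g(d)}_{\tau\cup\{R\}}$ for a fixed $g(d)$ (in fact $g(d)\le d^\nu$). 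Apply Theorem \ref{FSV95-threshold-thm} to $\theta$ at degree bound $g(d)$: this yields a radius $r_0$ and threshold $t_0$ such that for all $A^*,B^*\in\strs{K}^{g(d)}_{\tau\cup\{R\}}$, $\thr{A^*}{B^*}{r_0}{t_0}$ implies $A^*\equiv_\theta B^*$. Now set $r=\nu\cdot r_0$ and choose $t$ large enough (depending on $r_0$, $t_0$, $d$, $\tau$ and $R$, via a counting argument explained below) and claim $\strs{J}$ is Hanf $r,t$-threshold local on $\strs{K}\cap\strs{K}^d_\tau$.

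The core of the argument is the following transfer step. Suppose $A,B\in\strs{K}\cap\strs{K}^d_\tau$ and $\thr{A}{B}{r}{t}$; I must produce presentations $A^*\pres{P}A$ and $B^*\pres{P}B$ with $\thr{A^*}{B^*}{r_0}{t_0}$, for then $A^*\equiv_\theta B^*$, and by $\pres{P}$-invariance $A\in\strs{J}\iff A^*\models\theta\iff B^*\models\theta\iff B\in\strs{J}$, i.e.\ $\bqe{A}{B}{J}$. The key observation is that, by neighborhood boundedness, the isomorphism type of $\gcb{A^*}{a}{r_0}$ depends only on the isomorphism type of $\gcb{A^*}{a}{r_0}$ as a substructure sitting inside $\gcb{A}{a}{\nu r_0}=\gcb{A}{a}{r}$ together with the restriction of $R$ to that ball; and by \emph{localization} (condition \ref{substruc-local-cond} of Definition \ref{local-pres-def}) this restriction is itself a $\pres{P}$-presentation of the finite structure $\gcb{A}{a}{r}$. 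So: partition the points of $A$ according to the isomorphism type $\rho$ of their (abstract, un-presented) $r$-ball $\gcb{A}{a}{r}$; for each such finite $\rho$, fix once and for all a $\pres{P}$-presentation $\rho^*$ of it (possible since $\strs{K}$ is closed under substructures, hence contains $\rho$, and $\pres{P}$ presents every member of $\strs{K}$). Using \emph{disjoint local amalgamation} (condition \ref{disj-amalg-local-cond}) repeatedly over the connected components — or more carefully, over a partition of $A$ into pieces each of which is an $r$-ball or a disjoint union of far-apart $r$-balls — one assembles a global presentation $A^*\pres{P}A$ in which the presented $r_0$-ball of every point $a$ is isomorphic to the $r_0$-ball of the corresponding distinguished point in $\rho^*$. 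Do the same for $B$ using the \emph{same} choices $\rho^*$. Then the number of realizations of a given presented $r_0$-neighborhood type in $A^*$ is a finite sum, over the finitely many $r$-ball types $\rho$ that can contain it, of the number of realizations of $\rho$ in $A$ (times the number of points inside $\rho^*$ whose $r_0$-ball has that presented type); the analogous statement holds for $B^*$; and since $A$ and $B$ agree on each $\rho$-count up to threshold $t$, choosing $t$ so that $t$ dominates $t_0$ times the (bounded) number of relevant $\rho$'s and the (bounded) sizes of the $\rho^*$'s forces $A^*$ and $B^*$ to agree on each presented $r_0$-neighborhood count up to $t_0$, i.e.\ $\thr{A^*}{B^*}{r_0}{t_0}$.

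The delicate point, and the one I expect to be the main obstacle, is the global assembly of $A^*$ from the local presentations $\rho^*$: disjoint local amalgamation as stated only lets us combine presentations of two (or, by iteration, finitely many) pairwise-disjoint pieces, so to cover all of $A$ — which may be infinite, and whose $r$-balls certainly overlap — I need to be careful. The right way is to pick, for each isomorphism type $\rho$ of $r$-ball, a distinguished center, and to observe that two points $a,a'$ with $\dist{a}{a'}{G_A} > 2r$ have disjoint $r$-balls; a greedy/transfinite selection of a maximal $2r+1$-scattered set of centers, one representative per ball type honored, lets us amalgamate a presentation on the union of these disjoint $r$-balls, and then a final appeal to localization plus one more amalgamation extends it to all of $A$ (the leftover points get whatever presentation the extension supplies, and their presented $r_0$-balls, being contained in their $r$-balls, still only see structure we control up to the ambient ball type — here one must check that the $r_0$-ball of a leftover point is already determined by data on a distinguished ball of the same type, which follows because its $r$-ball is isomorphic to some $\rho$ and $\pres{P}$-presentations of a fixed finite structure that extend a common sub-presentation on an $r_0$-ball all give the same $r_0$-ball type — so in fact one should fix the presentation on $r_0$-balls first and amalgamate outward). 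Getting this bookkeeping exactly right — ensuring the presented $r_0$-type of \emph{every} point of $A^*$ is pinned down by its abstract $r$-ball type and the finitely many fixed choices, with no dependence on the global amalgamation — is the crux; once it is in place, the counting argument fixing $t$ and the final invocation of Theorem \ref{FSV95-threshold-thm} are routine.
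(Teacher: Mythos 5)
Your overall reduction --- pass to the expanded signature, invoke Theorem \ref{FSV95-threshold-thm} at degree bound $d^\nu$, and then transfer Hanf threshold equivalence of $A,B$ to Hanf threshold equivalence of suitably chosen presentations $A^*,B^*$ --- is exactly the paper's strategy. But the transfer step as you set it up has a genuine gap, and it is precisely the one you flag as ``the crux'': you need the presented $r_0$-type of \emph{every} point of $A^*$ to be a function of its abstract $r$-ball type and your fixed choices $\rho^*$, and disjoint local amalgamation cannot deliver this. Amalgamation only pins down the presentation on the (necessarily pairwise disjoint, hence scattered) family of balls you feed into it; on all remaining points the extension $A^*$ it returns is completely uncontrolled. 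Your proposed repair --- that $\pres{P}$-presentations of a fixed finite structure extending a common sub-presentation on an inner ball all induce the same presented $r_0$-ball type at the center --- is false: by neighborhood boundedness the presented $r_0$-ball of $a$ is determined only by the presentation on the abstract $\nu r_0$-ball of $a$, not by any smaller ball, and (e.g.\ for the local-order scheme $\pres{O}$) different extensions of a presentation of an inner ball to the full $\nu r_0$-ball genuinely change the presented $r_0$-type of the center. So the counting identity on which your choice of $t$ rests does not hold.

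The paper escapes this by \emph{not} trying to control every point. It splits the abstract $s$-neighborhood types ($s=\nu r_\theta$) into \emph{rare} ones (multiplicity bounded along the sequence of putative counterexamples) and \emph{common} ones. All rare points are covered by a bounded number of disjoint larger balls which, by the hypothesis $\thr{A}{B}{r}{t}$, can be matched isomorphically between $A$ and $B$; localization and amalgamation then let one copy a single presentation across the two matched covers, so rare presented types get \emph{exactly} equal counts. For a common type one does not need equal counts, only that each of its finitely many presented types occur at least $t_{d^*}$ times in \emph{both} structures; this is arranged by planting, via Lemma \ref{wide-lemma} (wideness of bounded-degree classes), $t_{d^*}$ scattered copies of each presented type in each structure, after which the uncontrolled leftover points do no harm: whatever presented type such a point realizes is one of the planted ones, and the threshold is already met on both sides. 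The rare/common dichotomy and the appeal to wideness are the two ingredients missing from your argument, and without something playing their role the assembly step cannot be completed.
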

Note that Theorem \ref{locpresthreshloc-thm} is non-uniform in the degree bound $d$, that is, both the locality radius and the threshold multiplicity depend on $d$. This is in contrast to the Threshold Locality Theorem of \cite{FSV95} where the locality radius depends only on the quantifier rank of an elementary boolean query and not on the degree bound on the Gaifman graphs of a class of structures over which it is evaluated.

The bound on degree enters into the proof of Theorem \ref{locpresthreshloc-thm} in part via the fact that the class of structures of degree bounded by $d$ is wide, in the sense of \cite{ADG08}.
\begin{definition}
Let $\strs{K}\subseteq \strs{K}_\tau$ and $A\in\strs{K}$.
\begin{enumerate}
\item 
$X\subseteq A$ is \emph{$r$-scattered} (\emph{away from $Y\subseteq A$}) if and only if for all $a,a'\in X$, $\gcb{A}{a}{r}\cap\gcb{A}{a'}{r}=\emptyset$ (and for all $a'\in Y$, $\gcb{A}{a}{r}\cap\gcb{A}{a'}{r}=\emptyset$).
\item
\strs{K}\ is $p,q$-wide if and only if for every $r\in\omega$, there is a function $\zeta_r:\omega\mapsto\omega$, called the $p,q$-\emph{width growth factor} for \strs{K} with respect to $r$, such that for every $A\in\strs{K}$, $X_1,\ldots,X_p\subseteq A$, every $Z\subseteq A$ with $\card{Z}\leq q$, and $m\in\omega$, if $\card{X_i}\geq \zeta(m)$, for $1\leq i\leq p$, then there are sets $Y_i\subseteq X_i$ with $\card{Y_i}\geq m$, for $1\leq i\leq p$, and $\bigcup_{1\leq i\leq p}Y_i$ is $r$-scattered away from $Z$. 
\end{enumerate}
\end{definition}
The concept of a wide class, as defined in \cite{ADG08}, corresponds to $1,0$-wide. This notion was introduced, though not named, in \cite{ADK06}, where it was used to establish the homomorphism preservation theorem over classes of finite structures of bounded degree, which are, in addition, closed under substructures and disjoint unions. The notion, and its generalizations, almost wide and quasi-wide, were exploited in \cite{ADG08} and \cite{Dawar10} to establish both homomorphism preservation theorems and preservation with respect to extensions theorems over several ``tame'' classes of finite structures. Important connections between these notions of width and the theory of sparse graphs are expounded in \cite{Sparsity} where, among other things, it is established that a substructure closed class is wide if and only if it has bounded degree.

The proof of Theorem \ref{locpresthreshloc-thm} makes use of the following combinatorial lemma which is implicit in \cite{ADG08}.
\begin{lemma}\label{wide-lemma}
Suppose $\strs{K}$ has bounded degree, that is, there is a $d$ such that the maximum degree of $G_A$ for every $A\in\strs{K}$ is at most $d$. Then for every $n,q$, \strs{K} is $n,q$-wide.
\end{lemma}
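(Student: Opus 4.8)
The plan is to establish $n,q$-wideness directly by a greedy selection argument, using the bounded degree to control how many points must be discarded at each step. First I would fix $r \in \omega$ and set out to define the width growth factor $\zeta_r$. Observe that in a structure of degree bounded by $d$, the $r$-neighborhood $\cb{G_A}{a}{r}$ of any point $a$ has size at most $N(d,r) := 1 + d + d^2 + \cdots + d^r$; consequently $\gcb{A}{a}{r}$ meets $\gcb{A}{a'}{r}$ only if $a'$ lies within distance $2r$ of $a$, and the number of such $a'$ is at most $N(d,2r)$. So each chosen point ``blocks'' at most $N(d,2r)$ other points from being chosen.

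The core of the argument is a greedy extraction. Given $X_1, \ldots, X_n \subseteq A$ and $Z \subseteq A$ with $\card{Z} \le q$, I would first delete from each $X_i$ every point lying within distance $2r$ of some element of $Z$; this removes at most $q \cdot N(d,2r)$ points from each $X_i$. Then I would build $\bigcup_i Y_i$ one point at a time: maintaining the invariant that the points selected so far form an $r$-scattered set (away from $Z$), at each stage pick any not-yet-blocked point from whichever $X_i$ still needs more elements, add it to $Y_i$, and delete from all the $X_j$ the at most $N(d,2r)$ points it blocks. To guarantee this process runs long enough to put $m$ points in each $Y_i$, it suffices that each $X_i$ start with more than $(q + nm)\cdot N(d,2r)$ points — after removing the $Z$-blocked points and the at most $nm$ rounds of selection, each still-deficient $X_i$ remains nonempty. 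Hence I would set $\zeta_r(m) = (q + nm)\cdot N(d,2r) + 1$ (note this depends on $n$, $q$, and $d$, which is permitted). Finally, one checks that $\bigcup_i Y_i$ is by construction $r$-scattered away from $Z$: any two selected points are at distance $> 2r$ in $G_A$, so their $r$-balls are disjoint, and likewise no selected point's $r$-ball meets that of an element of $Z$.

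I do not expect a serious obstacle here — the lemma is essentially a pigeonhole/greedy argument and the excerpt already flags it as ``implicit in \cite{ADG08}.'' The one point requiring slight care is bookkeeping the blocking sets correctly across all $n$ families simultaneously (a point chosen for $Y_i$ must be deleted from every $X_j$, not just $X_i$), and making sure the count of discarded points is charged against the right initial surplus; getting the order of quantifiers in the definition of $\zeta_r$ right (it may depend on $r$, $n$, $q$, $d$ but the resulting $Y_i$ must work for the given $m$) is the only place to be pedantic. An alternative, perhaps cleaner, route would be to invoke a Ramsey/independent-set bound: the ``$r$-closeness'' graph on $X_i$ (join $a \sim a'$ if $\dist{a}{a'}{G_A} \le 2r$) has degree at most $N(d,2r)$, so it has an independent set of size at least $\card{X_i}/(N(d,2r)+1)$; intersecting these across the $X_i$ and removing the $Z$-blocked points needs a little more care but gives the same asymptotics. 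I would present the greedy version as it makes the simultaneous handling of the $n$ families and the set $Z$ most transparent.
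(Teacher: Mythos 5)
Your proof is correct, and in fact the paper supplies no proof of this lemma at all---it is stated bare with the remark that it is ``implicit in \cite{ADG08}''---so there is no in-paper argument to compare against. The greedy extraction you give (prune the $\le q\cdot N(d,2r)$ points near $Z$, then pick points one at a time, charging each pick with the $\le N(d,2r)$ points it blocks in every $X_j$, yielding $\zeta_r(m)=(q+nm)\cdot N(d,2r)+1$) is exactly the standard counting argument underlying the cited result, and your bookkeeping---in particular blocking a chosen point's $2r$-ball in \emph{all} the families so that the full union $\bigcup_i Y_i$ is $r$-scattered---is handled correctly.
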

    
\begin{proof}[Proof of Theorem \ref{locpresthreshloc-thm}]
Let $\strs{K}\subseteq \strs{K}_\tau$ be closed under substructures; we write $\strs{K} ^d$ for $\strs{K}\cap\strs{K}^d_\tau$. Let \pres{P}\ be a local, neighborhood-bounded elementary presentation scheme for $\strs{K}$, with neighborhood expansion factor $\nu$, and $\strs{J}\subseteq \strs{K}_\tau$ be a \pres{P}-invariant elementary boolean query on $\strs{K}$. By Remark \ref{degbd-remark}, we may also suppose that \pres{P}\ is degree bounded with degree bound $d^\nu$. 

By Definition \ref{presinv-defn}, we may fix
$\theta$ to be a first-order sentence \pres{P}-invariant on \strs{K}\ such that for every $A\in\strs{K}$
\[A\in\strs{J}\ \ \iff\ \  (\exists A^*)(A^*\pres{P}A\wedge A^*\models\theta).\]
We need to show that for all $d\in\omega$, there are  $r,t\in\omega$ such that for all $A,B\in\strs{K}^d$, if $\thr{A}{B}{r}{t}$, then $\bqe{A}{B}{J}$. Fix $d$, and let $d^*=d^\nu$.
By Theorem \ref{FSV95-threshold-thm}, we may choose $r_\theta\in\omega$ and  $t_{d^*}\in\omega$, such that for all $A^*,B^*\in\strs{K}^{d^*}_{\tau\cup R}$, if $\thr{A^*}{B^*}{r_\theta}{t_{d^*}}$, then $A^*\equiv_\theta B^*$. (Indeed, as we've emphasized above, $r_\theta$ depends only on (the quantifier rank) of $\theta$, and not on the degree bound $d^*$, but this observation does \emph{not} translate to uniformity of our result.)   

Suppose that for every $r,t\in\omega$, there are structures $A_{r,t}, B_{r,t}\in\strs{K}^{d}$ such that $\thr{A_{r,t}}{B_{r,t}}{r}{t}$, but $A_{r,t}\in\strs{J}$ and $B_{r,t}\not\in\strs{J}$; we derive a contradiction from this hypothesis, by showing that for large enough $r^*$ and $t^*$, if $\thr{A_{r^*,t^*}}{B_{r^*,t^*}}{r^*}{t^*}$, then there are presentations  $A^*_{r^*,t^*}\pres{P}A_{r^*,t^*}$ and $B^*_{r^*,t^*}\pres{P}B_{r^*,t^*}$ such that  $\thr{A^*_{r^*,t^*}}{B^*_{r^*,t^*}}{r_\theta}{t_{d^*}}$.

Let $s=\nu\cdot r_\theta$. Let $\iota_1,\ldots,\iota_k$ be a list of all the isomorphism types of pointed $s$-neighborhoods that occur in any  $A_{r,t}$ (equivalently, $B_{r,t}$). 
 For each $1\leq j\leq k$, let $A^j_{r,t}$ be the set of $a\in A_{r,t}$ such that $\gcb{A_{r,t}}{a}{s}$ is of isomorphism type $\iota_j$ and in like manner define $B^j_{r,t}$. It is easy to construct sequences $A_n=A_{r_n,t_n}$ and $B_n=B_{r_n,t_n}$ with $r_n<r_{n+1}$ and $t_n<t_{n+1}$, for all $n\in\omega$ with the following property:
\begin{dispar}\label{common-rare-dispar}
there is a partition of $\{1,\ldots,k\}$ into a pair of sets $U$ and $V$ such that
\begin{enumerate}
\item\label{rare}
 for every $j\in U$ there is an $m_j < t_1$ such that for all $n\geq 1$, $\card{A^j_n}=m_j=\card{B^j_n}$ and 
\item\label{com}
 for all $j\in V$ and for all $n\geq 1$, $\card{A^j_n},\card{B^j_n}>n$.
\end{enumerate}

\end{dispar}
We call the isomorphism type $\iota_j$ \emph{rare}, if it falls under \ref{common-rare-dispar}(\ref{rare}), and in this case, we also call points in $A^j_n$ and $B^j_n$ \emph{rare}; we call the isomorphism type $\iota_j$ \emph{common}, if it falls under \ref{common-rare-dispar}(\ref{com}), and in this case, we also call points in $A^j_n$ and $B^j_n$ \emph{common}. First, we choose $r'$ large enough so that at most a fixed number $t_{rare}$ of disjoint $r'$-neighborhoods suffices to cover all rare points in $A_n$ and $B_n$ for all $n$. It is easy to see that any $r' > 2s\cdot\sum_{j\in U}m_j$ suffices to guarantee this. Moreover, recalling that for all $n$, $\thr{A_n}{B_n}{r_n}{t_n}$ if $r_{n'}\geq r'$ and $t_{n'} \geq t_{rare}$, the disjoint $r'$-neighborhoods
$\gcb{A_{n'}}{a_{n'1}}{r'},\ldots,\gcb{A_{n'}}{a_{n't_A}}{r'}$ and $\gcb{B_{n'}}{a_{n'1}}{r'},\ldots,\gcb{B_{n'}}{a_{n't_B}}{r'}$, may be chosen so that $t_A=t_B$, and for every $1\leq i\leq t_A$, $\gcb{A_{n'}}{a_{n'i}}{r'}\cong \gcb{B_{n'}}{b_{n'i}}{r'}$.
It follows immediately, by the fact that \pres{P}\ is a local presentation scheme, that there are for each such $n'$, presentations $A^*_{n'}\pres{P}A_{n'}$ and  $B^*_{n'}\pres{P}B_{n'}$ such that for every $1\leq i\leq t_A$, $\rest{A^*_{n'}}{\gcb{A_{n'}}{a_{n'i}}{r'}}\cong \rest{B^*_{n'}}{\gcb{B_{n'}}{b_{n'i}}{r'}}$. We let $r^*=2s\cdot\sum_{j\in U}m_j$, and $q^*$ be the maximum size of an $r^*$-neighborhood in a graph of degree $d$. Then $q=q^*\cdot t_{rare}$ is the maximum size of a cover of the $s$-neighborhoods generated by $U$ by a system of disjoint $r^*$-neighborhoods. For all large enough $n$, we select $\alpha_n\subseteq A_n$ and $\beta_n\subseteq B_n$ to be such covers.
We now turn to deal with common points.

First, observe that for all $j\in V$, there are finitely many, say $j^*$, isomorphism types $\iota_{j1},\ldots,\iota_{jj^*}$ such that every presentation of an $s$-neighborhood of isomorphism type $\iota_j$ is of type $\iota_{ji}$, for some $1\leq i\leq j^*$. Let $j^{max}=\max{\{j^*\mid j\in V\}}$. Let $m= q + t_{d^*}\cdot j^\max$. We are now ready to apply Lemma \ref{wide-lemma}. First, let $p=\card{V}$ and $\zeta_s$ be the $p,q$-width growth factor for $\{G_C\mid C\in\strs{K}^d\}$ with respect to $s$. It follows, that if  
$\card{A^j_n},\card{B^j_n}>\zeta_s(m)$, then there are systems of points $Y^A_n=\{a_{ji}\in A^j_n\mid j\in V \mbox{ and } 1\leq i \leq t_{d^*}\cdot j^*\}$ $s$-scattered away from from $\alpha_n$ and $Y^B_n=\{b_{ji}\in B^j_n\mid j\in V \mbox{ and } 1\leq i \leq t_{d^*}\cdot j^*\}$ $s$-scattered away from from $\beta_n$.  
Now choose $n\geq\zeta_s(m)$, so that for every $j\in V$, $\card{A^j_n},\card{B^j_n}>\zeta_s(m)$ and thus $Y^A_n$ is $s$-scattered away from from $\alpha_n$ and $Y^B_n$ is $s$-scattered away from from $\beta_n$. We may now conclude, using disjoint local amalgamation for \pres{P}\ that there are $A^*\pres{P}A_n$ and $B^*\pres{P}B_n$ such that multiplicity of isomorphism types of presentations of each rare type in $A^*$ and $B^*$ are identical, and are all covered by $\alpha_n$ and $\beta_n$ respectively, and the multiplicity of isomorphism types of presentations of each common type in both $A^*$ and $B^*$ exceeds the threshold $t_{d*}$. Thus, $\thr{A^*}{B^*}{r_\theta}{t_{d^*}}$, which contradicts our hypothesis.
\end{proof}

The following theorem establishes non-uniform Hanf threshold locality for boolean queries invariantly first-order definable over bounded degree finite structures (as opposed to a arbitrary bounded degree structures) with respect to local, neighborhood-bounded elementary presentation schemes. We omit its proof, since it is virtually identical to the proof of Theorem \ref{locpresthreshloc-thm}.
\begin{theorem}\label{finlocpresthreshloc-thm}
Suppose  $\strs{K}\subseteq \strs{F}_\tau$ is closed under substructures and  \pres{P} is a local, neighborhood-bounded elementary $R$-presentation scheme for $\strs{K}$. If $\strs{J}\subseteq \strs{F}_\tau$ is a \pres{P}-invariant elementary boolean query on $\strs{K}$, then for all $d$, there are $r,t\in\omega$ such that $\strs{J}$ is Hanf $r,t$-threshold local on $\strs{K}\cap\strs{F}^d_\tau$.  
\end{theorem}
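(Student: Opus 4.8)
The plan is to transcribe the proof of Theorem~\ref{locpresthreshloc-thm} essentially verbatim, checking at each step that the auxiliary results it invokes remain available when the ambient class is restricted to \emph{finite} structures. So: fix $d$, put $d^\ast=d^\nu$ where $\nu$ is the neighborhood expansion factor of \pres{P}, and (by Definition~\ref{presinv-defn}) fix a first-order sentence $\theta$ that is \pres{P}-invariant on $\strs{K}$ and defines $\strs{J}$ over $\strs{K}$. The key observation that makes the finite case a genuine special case is that every presentation involved is now a presentation of a finite structure: if $A\in\strs{K}\cap\strs{F}^d_\tau$ and $A^\ast\pres{P}A$, then $A^\ast$ is finite and, by Remark~\ref{degbd-remark}, lies in $\strs{F}^{d^\ast}_{\tau\cup R}$. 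Hence the original Fagin--Stockmeyer--Vardi threshold theorem (Theorem~\ref{FSV95-threshold-thm} in its finite incarnation) yields $r_\theta,t_{d^\ast}\in\omega$ such that $\thr{A^\ast}{B^\ast}{r_\theta}{t_{d^\ast}}$ implies $A^\ast\equiv_\theta B^\ast$ for all $A^\ast,B^\ast\in\strs{F}^{d^\ast}_{\tau\cup R}$.

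Next I would run the same contradiction argument: assume that for every $r,t\in\omega$ there are $A_{r,t},B_{r,t}\in\strs{K}\cap\strs{F}^d_\tau$ with $\thr{A_{r,t}}{B_{r,t}}{r}{t}$ but $A_{r,t}\in\strs{J}$, $B_{r,t}\notin\strs{J}$. Passing to a subsequence $A_n,B_n$, I extract the partition of the $s$-neighborhood types ($s=\nu\cdot r_\theta$) into \emph{rare} and \emph{common} types exactly as in display~\ref{common-rare-dispar}; this is a purely combinatorial construction on the finite structures $A_n,B_n$ and is unchanged. Covering the rare points by a bounded system of disjoint $r^\ast$-neighborhoods and matching them across $A_n$ and $B_n$ uses only $\thr{A_n}{B_n}{r_n}{t_n}$ together with localization and disjoint local amalgamation for \pres{P}. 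For the common points I apply Lemma~\ref{wide-lemma}, which holds for \emph{any} bounded-degree class, in particular for $\{G_C\mid C\in\strs{K}\cap\strs{F}^d_\tau\}$ --- here this is literally the wideness lemma of \cite{ADG08} for finite bounded-degree substructure-closed classes --- to obtain $s$-scattered systems of common points; then, amalgamating chosen presentations of these scattered common neighborhoods with the presentations covering the rare points (again using that \pres{P}\ is local, Definition~\ref{local-pres-def}, and that $\strs{K}$ is substructure-closed so that all the pieces are legitimate presentations), I produce $A^\ast\pres{P}A_n$ and $B^\ast\pres{P}B_n$ with $\thr{A^\ast}{B^\ast}{r_\theta}{t_{d^\ast}}$. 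By \pres{P}-invariance of $\theta$ and the choice of $r_\theta,t_{d^\ast}$ this contradicts $A_n\in\strs{J}$, $B_n\notin\strs{J}$.

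I do not expect a real obstacle, and that is precisely why the authors can omit the proof: the argument for Theorem~\ref{locpresthreshloc-thm} never exploits the possibility that $A$ or $B$ is infinite --- in both proofs each $A_n,B_n$ is finite, and the only infinite object is the indexing sequence $(A_n,B_n)_{n\in\omega}$. The single point that deserves an explicit sentence is that each ingredient has a finite-structures version: Theorem~\ref{FSV95-threshold-thm} restricted to $\strs{F}^{d^\ast}_{\tau\cup R}$ is just the FSV threshold theorem, Lemma~\ref{wide-lemma} restricted to finite graphs is the \cite{ADG08} wideness statement, and Remark~\ref{degbd-remark} is insensitive to finiteness. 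With those noted, the proof is a word-for-word copy of the proof of Theorem~\ref{locpresthreshloc-thm} with $\strs{F}$ in place of $\strs{K}$ throughout.
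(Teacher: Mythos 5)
Your proposal is correct and matches the paper exactly: the authors omit this proof precisely because, as they state, it is virtually identical to the proof of Theorem~\ref{locpresthreshloc-thm}, and your check that each ingredient (Theorem~\ref{FSV95-threshold-thm}, Lemma~\ref{wide-lemma}, Remark~\ref{degbd-remark}) survives restriction to finite structures is the whole content of the adaptation. The only quibble is your parenthetical claim that the $A_n,B_n$ in the proof of Theorem~\ref{locpresthreshloc-thm} are already finite --- there they range over $\strs{K}^d_\tau$ and may be infinite --- but this does not affect your argument for the finite case.
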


\section{Conclusion}

In this paper we establish three Hanf locality results for invariantly definable boolean queries, Theorems \ref{nbpresinvHanf-thm}, \ref{locpresthreshloc-thm}, and \ref{finlocpresthreshloc-thm}. These results suggest interesting avenues for further investigation. We list a few below.
\begin{enumerate}
\item 
Does Theorem \ref{locpresthreshloc-thm} (and \ref{finlocpresthreshloc-thm})  hold uniformly with respect to degree? We conjecture that the answer is no, that is, for some 
substructure closed class $\strs{K}\subseteq\strs{K}_\tau (\strs{F}_\tau)$ and some  \pres{P}\ local, neighborhood-bounded elementary presentation scheme for $\strs{K}$, there is a \pres{P}-invariant elementary boolean query $\strs{J}$ on $\strs{K}$ such that then for all $r$, there is a $d$, such that for all $t$, there are $A,B\in\strs{K}^d_\tau (\strs{F}^d_\tau)$ such that $\thr{A}{B}{r}{t}$, but $A\not\equiv_{\mathfrak{J}}B$.  
\item
Can Theorem \ref{finlocpresthreshloc-thm} be extended to other ``tame'' classes of structures, such as structures with Gaifman graphs of bounded tree-width. We are optimistic, insofar as application of generalizations of the notion of wide class, as deployed in the proof of Theorem \ref{finlocpresthreshloc-thm}, have been successfully applied to lift other model-theoretic results about degree-bounded structures to very general tame classes of finite structures (see \cite{ADK06,ADG08,Dawar10}).
\item 
In connection with Theorem \ref{nbpresinvHanf-thm}, it would be nice to have further examples of non-elementary queries that are elementarily invariantly definable via neighborhood bounded presentations over locally finite structures.
\end{enumerate}

\renewcommand{\baselinestretch}{1.0}
\bibliography{imfmt}

\end{document}